\title{Khovanov homology detects the Hopf links}
\author{John A. Baldwin}
\email{john.baldwin@bc.edu}
\address{Department of Mathematics\\Boston College}
\author{Steven Sivek}
\email{s.sivek@imperial.ac.uk}
\address{Department of Mathematics\\Imperial College London}
\author{Yi Xie}
\email{yxie@scgp.stonybrook.edu}
\address{Simons Center for Geometry and Physics\\State University of New York}
\thanks{JAB was supported by NSF CAREER Grant DMS-1454865.}
\newtheorem {theorem}{Theorem}
\newtheorem {proposition}[theorem]{Proposition}
\numberwithin{equation}{section}
\numberwithin{theorem}{section}
\theoremstyle{definition}
\newlist{pcases}{enumerate}{1}
\setlist[pcases]{
  label=\bf{Case~\arabic*:}\protect\thiscase.~,
  ref=\arabic*,
  align=left,
  labelsep=0pt,
  leftmargin=0pt,
  labelwidth=0pt,
  parsep=0pt
}
\newcommand{\case}[1][]{%
  \if\relax\detokenize{#1}\relax
    \def\thiscase{}%
  \else
    \def\thiscase{~#1}%
  \fi
  \item
}
\newcommand{\Z}{\mathbb{Z}}
\newcommand{\C}{\mathbb{C}}
\newcommand{\F}{\mathbb{F}}
\newcommand{\Q}{\mathbb{Q}}
\newcommand{\rank}{\operatorname{rank}}
\newcommand{\bK}{\mathbb{K}}
\newcommand\SHI{\mathit{SHI}}
\newcommand\KHI{\mathit{KHI}}
\newcommand\Kh{\mathit{Kh}}
\newcommand\Khr{\Kh_{\mathrm{red}}}
\newcommand\mirror[1]{\overline{#1}}
\DeclareFontFamily{U}{mathx}{\hyphenchar\font45}
\DeclareFontShape{U}{mathx}{m}{n}{
      <5> <6> <7> <8> <9> <10>
      <10.95> <12> <14.4> <17.28> <20.74> <24.88>
      mathx10
      }{}
\DeclareSymbolFont{mathx}{U}{mathx}{m}{n}
\DeclareMathAccent{\widecheck}{0}{mathx}{"71}
\newcommand{\lk}{\operatorname{lk}}
\tikzset{every picture/.style=thick}
\begin{document}

\begin{abstract}
We prove that any link in $S^3$ whose Khovanov homology is the same as that of a Hopf link must be isotopic to that Hopf link.  This holds for both reduced and unreduced Khovanov homology, and with coefficients in either $\Z$ or $\Z/2\Z$.
\end{abstract}

\maketitle

Khovanov homology \cite{khovanov} associates to each link $L \subset S^3$ a bigraded group $\Kh^{*,*}(L)$, whose graded Euler characteristic recovers the Jones polynomial $V_L(q)$, as well as a reduced variant $\Khr^{*,*}(L)$ \cite{khovanov-patterns}.  It is known to detect the unknot \cite{km-unknot}, the $n$-component unlink for all $n$ \cite{batson-seed}, and the trefoils \cite{bs-trefoils}.  In this note we prove the same for the Hopf links $H_\pm$, which are oriented so that the two components have linking number $\pm 1$.  Then
\begin{align*}
\Kh(H_+;\Z) &\cong \Z_{(0,0)} \oplus \Z_{(0,2)} \oplus \Z_{(2,4)} \oplus \Z_{(2,6)} \\
\Kh(H_-;\Z) &\cong \Z_{(0,0)} \oplus \Z_{(0,-2)} \oplus \Z_{(-2,-4)} \oplus \Z_{(-2,-6)} 
\end{align*}
where $\Z_{(h,q)}$ is a copy of $\Z$ in bigrading $(h,q)$.  The reduced Khovanov homology of a link depends on a choice of distinguished component, which we generally suppress from the notation, but we have $\Khr(H_+;\Z) \cong \Z_{(0,1)} \oplus \Z_{(2,5)}$ and $\Khr(H_-;\Z) \cong \Z_{(0,-1)} \oplus \Z_{(-2,-5)}$ regardless of this choice.

\begin{theorem} \label{thm:main}
Let $L$ be a link in $S^3$ such that either $\Kh(L) \cong \Kh(H_\pm)$ or $\Khr(L) \cong \Khr(H_\pm)$ as bigraded groups, with coefficients in either $\Z$ or $\Z/2\Z$.  Then $L = H_\pm$.
\end{theorem}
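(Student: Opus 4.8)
The plan is to follow the template of the trefoil detection theorem \cite{bs-trefoils}: first collapse the four variants of the hypothesis into a single statement about reduced Khovanov homology over $\F_2$, then bound the rank of an instanton invariant using a spectral sequence, and finally recover $L$ from the structure of sutured instanton homology. For the reduction step, note that mirroring reverses bidegrees in both $\Kh$ and $\Khr$, and that $\Kh(H_-)$ and $\Khr(H_-)$ are the bidegree-reversals of $\Kh(H_+)$ and $\Khr(H_+)$, so it suffices to treat the $H_+$ case and prove $L=H_+$. Since the Khovanov homologies of $H_\pm$ are torsion-free, the universal coefficient theorem identifies the $\Z$- and $\F_2$-coefficient hypotheses, and Shumakovitch's isomorphism $\Kh(L;\F_2)\cong\Khr(L;\F_2)\otimes(\F_2\oplus\F_2)$, up to an overall bidegree shift, identifies the reduced and unreduced ones; so in every case the assumption becomes $\Khr(L;\F_2)\cong\Khr(H_+;\F_2)$, which has total rank $2$ and is supported in bidegrees $(0,1)$ and $(2,5)$. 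From $\rank_{\F_2}\Kh(L;\F_2)=2\,\rank_{\F_2}\Khr(L;\F_2)=4$ and the inequality $\rank_{\F_2}\Kh(L;\F_2)\ge 2^{c}$ for a $c$-component link (via the Bar-Natan/Lee deformation, or \cite{batson-seed}) we get $c\le 2$; and $c=1$ is impossible, since for a knot $K$ the graded Euler characteristic of $\Khr(K)$ is the Jones polynomial $V_K$ with $V_K(1)=1$, forcing $\rank_{\F_2}\Khr(K;\F_2)$ to be odd. Hence $L=K_1\cup K_2$ has exactly two components. The graded Euler characteristic also records $\det(L)=\det(H_+)=2$, so in particular $L$ is non-split.

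Next I would invoke the Kronheimer--Mrowka spectral sequence relating reduced Khovanov homology to instanton homology, in its form for links, which yields $\dim_\C\KHI(L)\le\rank_{\F_2}\Khr(L;\F_2)=2$ (possibly after replacing $L$ by its mirror); together with the lower bound $\dim_\C\KHI(L)\ge 2$ for a two-component link this forces $\dim_\C\KHI(L)=2$. I would then bring to bear the structural theory of $\KHI(L)=\SHI(S^3\ssm N(L))$ as a balanced sutured invariant: its Alexander grading detects the Thurston norm, hence the Seifert genus of $L$; the rank in the extremal Alexander grading detects fiberedness; and a taut balanced sutured manifold whose sutured instanton homology has rank $1$ is a product. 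Because $L$ is non-split, the only way to have $\dim_\C\KHI(L)=2$ is for $L$ to be a fibered link whose fiber is a connected surface with two boundary circles and Euler characteristic $0$, i.e.\ an annulus. A fibered link with annulus fiber has monodromy a power $\tau^n$ of the boundary Dehn twist and is therefore the $(2,2n)$-torus link; since $\rank_{\F_2}\Khr(T(2,2n);\F_2)=2|n|$ while $\rank_{\F_2}\Khr(L;\F_2)=2$, we conclude $|n|=1$, so $L$ is a Hopf link, and the sign is determined by the homological grading support of $\Khr(L)$. Therefore $L=H_+$, as desired.

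The hard part will be the passage from $\dim_\C\KHI(L)=2$ to the geometric conclusion that $L$ is a genus-zero fibered link. This requires the link versions of the instanton genus-detection, fiberedness-detection, and product-detection theorems, as well as careful bookkeeping of the Alexander grading on $\KHI$ of a two-component link; one must in particular exclude the a priori possibilities that $\KHI(L)$ is concentrated in a single Alexander grading (which would force $L$ to be split) or is distributed as rank $1+1$ across the extremal gradings of a fiber of positive genus. A complementary route, which could be run in parallel, uses the Ozsv\'ath--Szab\'o spectral sequence to show $\rank_{\F_2}\hfhat(\Sigma_2(L);\F_2)\le 2$, so that the double branched cover $\Sigma_2(L)$ is an L-space with $H_1\cong\Z/2$, and then identifies $\Sigma_2(L)$ with $\mathbb{RP}^3$.
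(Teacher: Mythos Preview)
Your argument has a genuine gap at the step where you invoke the Kronheimer--Mrowka spectral sequence to conclude $\dim_{\C}\KHI(L)\le\rank_{\F}\Khr(L;\F)=2$. For a knot $K$ one has $I^\natural(K;\C)\cong\KHI(K)$, so the spectral sequence $\Khr(\mirror{K})\Rightarrow I^\natural(K)$ yields that inequality; but for links with more than one component the identification of $I^\natural$ with $\KHI$ fails, and in fact the Hopf link itself has $\dim_\C\KHI(H_\pm)=4$ while $\rank\Khr(H_\pm)=2$. So the inequality you assert is simply false for two-component links, and the entire passage from ``$\dim_\C\KHI(L)=2$'' to ``genus-zero fibered'' never gets off the ground.

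What the paper does instead is use the module structure on $\Khr(L)$. After showing (via the graded Batson--Seed inequality) that $L$ has two unknotted components with $\lk=\pm1$ and that the $R_1=\Z[x_1]/\langle x_1^2\rangle$-action on $\Khr(L;\Q)\cong\Q^2$ is trivial, one applies Xie's refinement of the spectral sequence, which bounds $\rank\KHI(L)$ by the rank of the Koszul homology $H^*(\Khr(L)\otimes_{R_1}K(x_1))$; the trivial $x_1$-action makes this $4$, not $2$. With only $\rank\KHI(L)\le 4$ in hand, the Alexander grading is then pinned down by Alexander-polynomial constraints rather than by fiberedness detection: since $\sum_j\chi(\KHI(L,j))t^j=\pm(t-2+t^{-1})\tilde\Delta_L(t,t)$, the polynomial $t-2+t^{-1}$ must divide the left side, and Torres's condition $\tilde\Delta_L(1,1)=\pm\lk(K_1,K_2)=\pm1$ then forces $\KHI(L)\cong\C_1\oplus(\C^2)_0\oplus\C_{-1}$. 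From there genus detection gives $g(L)=0$, and one finishes (much as you suggest) by identifying $L$ as a $(\pm1)$-cable of the unknot.
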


Our proof makes use of a handful of spectral sequences involving Khovanov homology.  Batson and Seed's link splitting spectral sequence \cite{batson-seed}, together with Kronheimer and Mrowka's spectral sequence converging to singular instanton knot homology \cite{km-unknot}, will tell us that such a link $L$ must have exactly two unknotted components with linking number $\pm 1$.  Once we know the module structure on $\Khr(L)$, we apply a refinement by the third author \cite{xie-point} of the latter spectral sequence to determine that the instanton knot homology $\KHI(L)$ of \cite{km-excision} has rank at most 4.  We will then determine the Alexander grading on $\KHI(L)$ to see that $L$ has Seifert genus zero, and conclude that $L$ must be a Hopf link.

If $L$ is a link with $r$ components, then $\Kh^{*,*}(L)$ is invariant as a module over the ring
\[ R_r = \Z[x_1,\dots,x_r]/\langle x_1^2,\dots,x_r^2\rangle \]
\cite{khovanov-patterns,hedden-ni}, in which each $x_i$ preserves the $h$ (``homological'') grading while lowering the $q$ (``quantum'') grading.  If we define the reduced Khovanov homology of $L$ using the component which corresponds to $x_r$, then $\Khr^{*,*}(L)$ is a module over $R_{r-1}$.  We begin by determining this module structure.

\begin{proposition} \label{prop:khr-q}
Let $L$ be a link in $S^3$, and let $H$ be either $H_+$ or $H_-$.  Let $\F = \Z/2\Z$. Suppose that any one of the following is true as an isomorphism of bigraded groups:
\begin{enumerate}
\item $\Kh(L;\Z) \cong \Kh(H;\Z)$. \label{i:unr-z}
\item $\Kh(L;\F) \cong \Kh(H;\F)$. \label{i:unr-f}
\item $\Khr(L;\Z) \cong \Khr(H;\Z)$ for some choice of component of $L$. \label{i:red-z}
\item $\Khr(L;\F) \cong \Khr(H;\F)$ for some choice of component of $L$. \label{i:red-f}
\end{enumerate}
Then $L$ has exactly two components, which are both unknots, and if $H=H_\pm$ then their linking number is $\pm1$.  Moreover, we have $\rank \Khr(L;\Z) = 2$, with
\[ \Khr(L;\Q) \cong \begin{cases} \Q_{(0,1)} \oplus \Q_{(2,5)}, & H=H_+ \\ \Q_{(0,-1)} \oplus \Q_{(-2,-5)}, & H=H_-, \end{cases} \]
and the $R_1$-action on $\Khr(L;\Q)$ is trivial.
\end{proposition}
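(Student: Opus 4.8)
The plan is to collapse all four hypotheses to a single statement over $\F=\Z/2\Z$, then use the two cited spectral sequences to identify the components, and finally read off the reduced homology, the module structure, and the linking number. Since $\Kh(H;\Z)$ and $\Khr(H;\Z)$ are free, the universal coefficient theorem turns the two $\Z$-coefficient hypotheses into the corresponding $\F$-coefficient ones with the bigrading preserved. Over $\F$ one has $\Kh(L;\F)\cong\Khr(L;\F)\otimes V$ with $V\cong\F_{(0,1)}\oplus\F_{(0,-1)}$, and tensoring with $V$ is injective on isomorphism classes of bigraded $\F$-spaces (divide Poincar\'e polynomials by $q+q^{-1}$, which is a non-zero-divisor). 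Hence all four hypotheses are equivalent to the single assertion that $\Kh(L;\F)\cong\Kh(H;\F)$, equivalently $\Khr(L;\F)\cong\Khr(H;\F)$, as bigraded $\F$-spaces; in particular $\dim_\F\Kh(L;\F)=4$ and $\dim_\F\Khr(L;\F)=2$.

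Next I would pin down the components. By Batson and Seed's link splitting spectral sequence, $\dim_\F\Kh(L;\F)\ge 2^{|L|}$, so $L$ has at most two components; and $L$ is not a knot, because the reduced Khovanov homology of a knot has odd total $\F$-rank (its graded Euler characteristic is the Jones polynomial, whose value at $1$ is $1$, so the $\Q$-rank is odd, and the $\F$-rank has the same parity), while $\dim_\F\Khr(L;\F)=2$. Thus $L=L_1\cup L_2$ has exactly two components. Applying the splitting spectral sequence to the decomposition of $L$ into $L_1\sqcup L_2$ gives
\[ 4=\dim_\F\Kh(L;\F)\ \ge\ \dim_\F\Kh(L_1;\F)\cdot\dim_\F\Kh(L_2;\F)\ \ge\ 2\cdot 2=4, \]
so $\dim_\F\Khr(L_i;\F)=1$ for $i=1,2$. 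Kronheimer and Mrowka's spectral sequence converging to singular instanton knot homology, together with the fact that the latter detects the unknot, then shows that $L_1$ and $L_2$ are both unknots.

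To compute $\Khr(L;\Q)$, note that the universal coefficient theorem gives $\dim_\Q\Khr(L;\Q)\le\dim_\F\Khr(L;\F)=2$, while the graded Euler characteristic of $\Khr(L;\Q)$ equals that of $\Khr(H;\Q)$, namely $q+q^5$ for $H_+$ (and $q^{-1}+q^{-5}$ for $H_-$); since this is not a single monomial, $\dim_\Q\Khr(L;\Q)=2$, and in particular $\rank_\Z\Khr(L;\Z)=2$. As the rational and $\F$-ranks agree there is no $2$-torsion, so the bigraded dimensions of $\Khr(L;\Q)$ coincide with those of $\Khr(L;\F)\cong\Khr(H;\F)$, giving $\Khr(L;\Q)\cong\Q_{(0,1)}\oplus\Q_{(2,5)}$ for $H=H_+$ and the mirror for $H=H_-$. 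Triviality of the $R_1$-action is then immediate: $x_1$ fixes the homological grading and lowers the quantum grading, but each nonzero homological degree of $\Khr(L;\Q)$ is concentrated in a single quantum degree, so $x_1$ has nowhere to land and acts as $0$.

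It remains to compute the linking number, and this is the step I expect to be the main obstacle. Since $\dim_\F\Kh(L;\F)=4=2^{|L|}$ equals the $\F$-rank of the abutment $\Kh(L_1\sqcup L_2;\F)=\Kh(U\sqcup U;\F)$, the Batson--Seed spectral sequence from $\Kh(L;\F)$ degenerates at its first page, so $\Kh(L;\F)$ is isomorphic, as a bigraded $\F$-space, to $\Kh(U\sqcup U;\F)$ with the grading shift prescribed by Batson and Seed, in which the homological and quantum gradings are shifted by fixed multiples of $\lk(L_1,L_2)$; matching this against the actual bigrading of $\Kh(H_\pm;\F)$ forces $\lk(L_1,L_2)=\pm1$. (Alternatively one can run the Lee spectral sequence, which likewise degenerates here because the Lee homology of a two-component link has rank $4$, and invoke the description of the quantum gradings of the Lee generators in terms of the linking number.) The delicate point is precisely this grading bookkeeping — enough to rule out $\lk(L_1,L_2)=0,\pm2,\dots$ and to fix the sign; everything else reduces to universal-coefficient manipulations together with the two cited detection theorems.
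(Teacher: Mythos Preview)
Your argument is correct and follows the same outline as the paper: reduce all four hypotheses to $\F$-coefficients via universal coefficients and Shumakovitch's identity $\Kh(\,\cdot\,;\F)\cong\Khr(\,\cdot\,;\F)\otimes V$, use Batson--Seed together with Kronheimer--Mrowka to see that $L$ has exactly two unknotted components, pin down the linking number with the graded refinement of Batson--Seed, and then read off $\Khr(L;\Q)$ and the trivial $R_1$-action from grading considerations.

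Two small remarks. First, your Euler-characteristic argument that $\dim_\Q\Khr(L;\Q)\ge 2$ is cleaner than the paper's route, which instead applies the Batson--Seed inequality over $\Q$ to get $\dim_\Q\Kh(L;\Q)\ge 4$ and then feeds this into the long exact sequence relating $\Kh$ and $\Khr$. Second, your linking-number step is right in spirit, but the claim that the degenerated spectral sequence gives a \emph{bigraded} isomorphism up to a shift determined by $\lk$ is not quite accurate: the individual $h$- and $q$-shifts depend on the specific crossing changes chosen to split $L$, and only the single grading $\ell=h-q$ is shifted intrinsically, by $t=2\lk(K_1,K_2)$. This $\ell$-graded rank inequality (Batson--Seed, Corollary~4.4) is exactly what the paper uses: the $\ell$-ranks $1,2,1$ of $\Kh(H_+;\F)$ sit in degrees $0,-2,-4$, while those of $\Kh(U\sqcup U;\F)$ sit in degrees $2,0,-2$, forcing $t=2$ and hence $\lk=1$ (and similarly $\lk=-1$ for $H_-$). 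So the ``grading bookkeeping'' you flagged as the delicate point is resolved precisely this way, and your Lee-homology alternative would work equally well.
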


\begin{proof}
Suppose that $H=H_+$; the case of $H_-$ is identical.   We first claim that each of conditions \eqref{i:unr-z}, \eqref{i:unr-f}, and \eqref{i:red-z} implies condition \eqref{i:red-f}, and then use the latter to prove the rest of the proposition.  Certainly \eqref{i:unr-z} implies \eqref{i:unr-f} and \eqref{i:red-z} implies \eqref{i:red-f} by the universal coefficient theorem.  Moreover, \eqref{i:unr-f} and \eqref{i:red-f} are equivalent by the identity
\[ \Kh^{h,q}(K;\F) \cong \Khr^{h,q-1}(K;\F) \oplus \Khr^{h,q+1}(K;\F) \]
where $K$ is an arbitrary link \cite[Corollary~3.2.C]{shumakovitch}, so this proves the claim.

We suppose from now on that condition~\eqref{i:red-f} holds; since this is equivalent to \eqref{i:unr-f}, we have
\begin{align*}
\Kh(L;\F) &\cong \F_{(0,0)} \oplus \F_{(0,2)} \oplus \F_{(2,4)} \oplus \F_{(2,6)} \\
\Khr(L;\F) &\cong \F_{(0,1)} \oplus \F_{(2,5)}.
\end{align*}
If $L$ were a knot, then the rank of $\Khr(L;\F)$ would be congruent mod 2 to $V_L(-1) = \pm \det(L) \equiv 1 \pmod{2}$, so $L$ must be a link, say $L = K_1 \cup K_2 \cup \dots \cup K_r$ with $r \geq 2$.

We apply the rank inequality \cite[Corollary~1.6]{batson-seed} derived from Batson and Seed's link splitting spectral sequence, namely that if $\bK$ is any field then
\begin{equation} \label{eq:bs-inequality}
\rank \Kh(L;\bK) \geq \rank \bigotimes_{i=1}^r \Kh(K_i;\bK).
\end{equation}
For $\bK = \F$, the left side is $4$ while the right side is at least $2^r$ with equality if and only if $K_i$ is unknotted \cite{km-unknot}, so $r=2$ and $K_1$ and $K_2$ are both unknotted.  (See \cite[Proposition~7.1]{batson-seed}.)  In fact, this inequality respects the grading $\ell=h-q$: if we let $t=2\lk(K_1,K_2)$, then \cite[Corollary~4.4]{batson-seed} says that
\[ \rank^\ell \Kh(L;\F) \geq \rank^{\ell+t} \left(\bigotimes_i \Kh(K_i;\F)\right) = \rank^{\ell+t} \big(\F_{(0,-1)}\oplus \F_{(0,1)}\big)^{\otimes 2}. \]
The left side has ranks $1,2,1$ in $\ell$-gradings $0,-2,-4$ respectively, and the right side has rank 2 when $\ell+t=0$, so we must have $-2 + t=0$, or $\lk(K_1,K_2) = \frac{1}{2}t=1$.

Finally, applying the inequality \eqref{eq:bs-inequality} with $\bK = \Q$ says that $\rank \Kh(L;\Q) \geq 4$,
and there is an exact triangle
\[ \dots \to \Kh(L;\Q) \to \Khr(L;\Q) \to \Khr(L;\Q) \to \dots, \]
so we must have $\rank \Khr(L;\Q) \geq 2$.  Applying the universal coefficient theorem repeatedly, we determine first that $\rank \Khr(L;\Z) \geq 2$, and that if this inequality is strict then $\rank \Khr(L;\F) > 2$ as well, which is false.  Thus $\rank \Khr(L;\Z) = 2 = \rank \Khr(L;\F)$, and so the $\Z$ summands of the former have the same bigrading as the $\F$ summands of the latter.  We deduce from this that
\[ \Khr(L;\Q) \cong \Q_{(0,1)} \oplus \Q_{(2,5)}. \]
Now if we pick a distinguished component of $L$ so that $\Khr(L;\Q)$ is a module over $R_1 = \Z[x_1]/\langle x_1^2\rangle$, then $x_1$ acts with square zero on each $\Khr^{h,*}(L;\Q)$, which is either $0$ or $\Q$, and so it must act trivially as claimed.
\end{proof}

We will now make use of Kronheimer and Mrowka's \emph{instanton knot homology} $\KHI$, defined in \cite{km-excision} and extended to links in \cite{km-alexander}.  They showed in \cite{km-unknot} that for any knot $K$, the complex vector space $\KHI(K)$ is isomorphic to the reduced singular instanton knot homology $I^\natural(K;\C)$, and using a spectral sequence $\Kh(\mirror{K}) \Rightarrow I^\natural(K)$ they deduced a rank inequality
\[ \rank \Khr(K) \geq \rank \KHI(K). \]
Based on results in \cite{km-excision}, this proved that $\Khr$ has rank 1 if and only if $K$ is unknotted.  The third author \cite{xie-point} extended this from knots to pointed links and incorporated the module structure on Khovanov homology to prove the following.
\begin{theorem}[{\cite[Theorem~5.4]{xie-point}}] \label{thm:xie-inequality}
Let $L$ be a link of $r$ components in $S^3$, and fix a base point on the $r$th component, equipping $\Khr(L;\Z)$ with an $R_{r-1}$-module structure.  Let $X' = (x_1,\dots,x_{r-1}) \in R_{r-1}^{\oplus r-1}$.  Then
\[ \rank \KHI(L) \leq \rank H^*(\Khr(L;\Z) \otimes_{R_{r-1}} K(X')), \]
where $K(X')$ is the Koszul complex
\[ 0 \to R_{r-1} \xrightarrow{\wedge X'} \Lambda^1 R_{r-1}^{\oplus r-1} \xrightarrow{\wedge X'} \Lambda^2 R_{r-1}^{\oplus r-1} \xrightarrow{\wedge X'} \dots \xrightarrow{\wedge X'} \Lambda^{r-1} R_{r-1}^{\oplus r-1} \to 0. \]
\end{theorem}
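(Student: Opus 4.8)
The plan is to enrich Kronheimer--Mrowka's cube-of-resolutions spectral sequence so that it sees the $R_{r-1}$-module structure on $\Khr$, and then to splice in the Koszul complex $K(X')$ of the corresponding operators on the instanton side.

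First I would recall the mechanism behind the knot rank inequality $\rank\Khr(K)\ge\rank\KHI(K)$ of \cite{km-unknot}. Fix a diagram $D$ for the mirror $\mirror L$ with $n$ crossings; resolving its crossings in all $2^n$ ways gives a cube $\{0,1\}^n$ of planar unlinks. Applying the reduced singular instanton functor $I^\natural$ (with the distinguished basepoint on the $r$th component) to the vertices and the elementary saddle cobordism maps to the edges assembles into a complex whose homology is $I^\natural(L)$, by the unoriented skein exact triangle for $I^\natural$; over $\C$ this agrees with $\KHI(L)$ by \cite{km-unknot} and its extension to links. Filtering by the cube grading yields a spectral sequence whose $E_1$-page is the Khovanov chain complex of $\mirror L$, hence with $E_2$-page $\Khr(\mirror L)$.

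Next I would introduce, for each component $K_i$ with $i<r$, a filtration-preserving endomorphism $\sigma_i$ of the cube complex modeled on the singular-instanton cobordism map for a meridional ``dot'' on $K_i$ (equivalently, a point/holonomy class of $K_i$), and check that the $\sigma_i$ commute and square to zero up to chain homotopy. The crux is that $\sigma_i$ induces $\pm x_i$ on the $E_2$-page $\Khr(\mirror L)$: this is a vertexwise comparison of the instanton dot cobordism with multiplication by $x$ in the Khovanov Frobenius algebra $\Z[x]/(x^2)$, and it is here that the point-class technology of \cite{xie-point} does its work. Granting this, I would form the iterated mapping cone of $\sigma_1,\dots,\sigma_{r-1}$ acting on the cube complex --- equivalently, the total complex of its tensor product over $R_{r-1}$ with $K(X')$. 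Geometrically this total complex computes $I^\natural$ of $L$ with $r-1$ meridians adjoined, and an excision/sutured-decomposition argument (building on \cite{km-excision}) bounds the rank of its homology below by $\rank\KHI(L)$; filtering the same total complex by the cube grading instead produces a spectral sequence with $E_2$-page $H^*\bigl(\Khr(\mirror L)\otimes_{R_{r-1}}K(X')\bigr)$. Combining, $\rank\KHI(L)\le\rank H^*(\Khr(\mirror L)\otimes_{R_{r-1}}K(X'))$; and since the mirror isomorphism $\Khr^{h,q}(\mirror L)\cong\Khr^{-h,-q}(L)$ intertwines the two $x_i$-actions while $K(X')$ is self-dual up to a shift, the right-hand side equals $\rank H^*(\Khr(L)\otimes_{R_{r-1}}K(X'))$, which is the stated bound. (One works over $\F$ throughout and passes back to $\Z$ by the universal coefficient theorem.)

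The main obstacle is the instanton-geometric input: showing that the singular-instanton dot cobordism really descends to multiplication by $x_i$ on the Khovanov $E_2$-page requires a careful local-model computation together with control of the cube filtration, and identifying the instanton homology of $L$-with-meridians tightly enough to bound $\rank\KHI(L)$ from above rests on the excision theorem and a delicate sutured-manifold argument. The homological algebra --- convergence of the enriched spectral sequence, self-duality of $K(X')$, and the passage from $\F$ to $\Z$ --- is by comparison routine.
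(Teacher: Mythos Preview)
The paper does not give a proof of this statement: it is quoted from \cite[Theorem~5.4]{xie-point} and used as a black box, with no argument supplied here.  There is therefore nothing in the present paper to compare your proposal against.  For what it is worth, your outline --- enriching the Kronheimer--Mrowka cube-of-resolutions complex by filtered point-class endomorphisms that induce the $x_i$-action on the Khovanov $E_2$-page, tensoring with the Koszul complex, and then identifying the instanton side with $\KHI(L)$ via excision --- is a reasonable pr\'ecis of the strategy one expects in \cite{xie-point}, but checking the details (in particular the precise relationship between the Koszul-twisted $I^\natural$ and $\KHI$ for multi-component links, and whether one really needs to pass through $\F$ coefficients) requires consulting that source directly.
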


\begin{proposition} \label{prop:rank-koszul}
Let $L \subset S^3$ be a link satisfying any of the hypotheses of Proposition~\ref{prop:khr-q}.  Then $\rank \KHI(L) \leq 4$.
\end{proposition}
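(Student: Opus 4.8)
The plan is to feed the structural information from Proposition~\ref{prop:khr-q} into Theorem~\ref{thm:xie-inequality}. First I would invoke Proposition~\ref{prop:khr-q} to learn that $L$ has exactly $r=2$ components, so the relevant ring is $R_{r-1} = R_1 = \Z[x_1]/\langle x_1^2\rangle$ and $X' = (x_1) \in R_1^{\oplus 1}$. In this case the Koszul complex $K(X')$ reduces to the two-term complex $0 \to R_1 \xrightarrow{\wedge X'} \Lambda^1 R_1^{\oplus 1} \to 0$, which under the canonical identification $\Lambda^1 R_1^{\oplus 1} \cong R_1$ becomes $0 \to R_1 \xrightarrow{\,\cdot x_1\,} R_1 \to 0$. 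Tensoring over $R_1$ with $\Khr(L;\Z)$, with respect to a base point on one of the two components (the $R_1$-module structure being recorded by Proposition~\ref{prop:khr-q} for either choice), gives the complex
\[ 0 \to \Khr(L;\Z) \xrightarrow{\; x_1 \;} \Khr(L;\Z) \to 0, \]
and Theorem~\ref{thm:xie-inequality} bounds $\rank \KHI(L)$ by the total rank of its cohomology.

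Next I would compute this cohomology rationally. Since $\Q$ is flat over $\Z$, cohomology commutes with $-\otimes_\Z\Q$, so
\[ \rank H^*\big(\Khr(L;\Z) \otimes_{R_1} K(X')\big) = \dim_\Q H^*\big(\,\Khr(L;\Q) \xrightarrow{x_1} \Khr(L;\Q)\,\big). \]
By the last sentence of Proposition~\ref{prop:khr-q}, the $R_1$-action on $\Khr(L;\Q)$ is trivial, so the differential $x_1$ on the right vanishes; hence the cohomology is just $\Khr(L;\Q)$ in each of the two homological degrees, of total dimension $2\dim_\Q \Khr(L;\Q) = 2\cdot 2 = 4$.

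Putting these together with Theorem~\ref{thm:xie-inequality} gives $\rank \KHI(L) \leq 4$, as claimed. I expect no real obstacle here: the only point needing care is the discrepancy between the integral coefficients required by Theorem~\ref{thm:xie-inequality} and the rational computation of the module structure in Proposition~\ref{prop:khr-q}, which the flatness of $\Q$ over $\Z$ resolves cleanly. Equivalently, one can argue directly that $x_1\colon \Khr(L;\Z) \to \Khr(L;\Z)$ has image contained in the torsion subgroup, so that both $\ker(x_1)$ and $\coker(x_1)$ have free rank equal to $\rank\Khr(L;\Z) = 2$.
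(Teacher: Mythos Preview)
Your argument is correct and follows essentially the same route as the paper: identify the Koszul complex for $r=2$ as $0\to R_1\xrightarrow{\cdot x_1} R_1\to 0$, use Proposition~\ref{prop:khr-q} to see that $x_1$ acts trivially on $\Khr(L;\Q)\cong\Q^2$, and conclude that the relevant cohomology has rank $4$. Your explicit invocation of the flatness of $\Q$ over $\Z$ to pass from integral to rational coefficients is a bit more careful than the paper's presentation, but the content is the same.
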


\begin{proof}
Proposition~\ref{prop:khr-q} tells us that $L$ is a 2-component link, and that $\Khr(L;\Q) \cong \Q^2$ has a trivial action of $R_1 = \Z[x_1]/\langle x_1^2\rangle$.  In this case the Koszul complex $K(X')$ is
\[ 0 \to R_1 \xrightarrow{\cdot x_1} R_1 \to 0, \]
and $\Khr(L;\Q) \cong (R_1/\langle x_1\rangle \otimes_\Z \Q)^{\oplus 2}$ as $R_1$-modules.  The complex
\[ (R_1/\langle x_1\rangle \otimes_\Z \Q) \otimes_{R_1} K(X') = \big( 0 \to \Q \xrightarrow{0} \Q \to 0 \big) \]
has cohomology $\Q^2$, so we conclude that
\[ \rank H^*(\Khr(L;\Z) \otimes_{R_1} K(X')) = 4 \]
and the proposition now follows from Theorem~\ref{thm:xie-inequality}.
\end{proof}

For links in $S^3$ (and more generally for null-homologous links with a choice of Seifert surface), instanton knot homology is equipped with an Alexander grading
\[ \KHI(L) = \bigoplus_{j=-g+1-r}^{g-1+r} \KHI(L,j), \]
where $L$ has $r$ components and Seifert genus $g$.  This grading is symmetric in the sense that $\KHI(L,j) \cong \KHI(L,-j)$ for all $j$.  Each $\KHI(L,j)$ is canonically $\Z/2\Z$-graded, and the Euler characteristics of these summands determine the Alexander polynomial of $L$ by
\begin{equation} \label{eq:euler-characteristic}
\sum_j \chi(\KHI(L,j))t^j = -(t^{1/2} - t^{-1/2})^{r-1} \Delta_L(t).
\end{equation}
See \cite[Theorem~3.6]{km-alexander} or \cite{lim}.

Let $\tilde\Delta_L(t_1,t_2)$ denote the multivariable Alexander polynomial of a 2-component link $L = K_1 \cup K_2$.  This is related to the single-variable Alexander polynomial $\Delta_L(t)$ by
\[ \Delta_L(t) = \pm(t^{1/2}-t^{-1/2}) \tilde\Delta_L(t,t), \]
see e.g.\ \cite[Lemma~10.1]{milnor}.  It follows from \eqref{eq:euler-characteristic} that
\[ \sum_j \chi(\KHI(L,j)) t^j = \pm (t^{1/2}-t^{-1/2})^2 \tilde\Delta_L(t,t), \]
and hence that the right side of
\begin{equation} \label{eq:multi-khi}
\tilde\Delta_L(t,t) = \pm \left(\frac{\sum_j \chi(\KHI(L,j)) t^j}{t-2+t^{-1}}\right)
\end{equation}
must be a Laurent polynomial, i.e.\ that $t-2+t^{-1}$ divides the numerator.  Torres \cite{torres} proved that $\tilde\Delta_L(t_1,t_2)$ also satisfies $\tilde\Delta_L(1,1) = \pm \lk(K_1,K_2)$.

\begin{proposition} \label{prop:khi-alexander}
Let $L = K_1 \cup K_2$ be a two-component link with $\lk(K_1,K_2)=\pm 1$.  If $\rank \KHI(L) \leq 4$, then 
\[ \KHI(L) = \C_1 \oplus (\C^{\oplus 2})_0 \oplus C_{-1}, \]
where the subscripts denote the Alexander grading, and $\Delta_L(t) = \pm (t^{1/2} - t^{-1/2})$.
\end{proposition}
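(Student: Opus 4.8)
The plan is to read everything off the Euler-characteristic relation \eqref{eq:multi-khi}, Torres's identity $\tilde\Delta_L(1,1)=\pm\lk(K_1,K_2)$, and the rank bound $\rank\KHI(L)\le 4$. Set $Q(t)=\sum_j\chi(\KHI(L,j))\,t^j$, so that by the discussion preceding the proposition $Q(t)=\pm(t-2+t^{-1})D(t)$ where $D(t):=Q(t)/(t-2+t^{-1})$ equals $\tilde\Delta_L(t,t)$ up to sign and is therefore a genuine Laurent polynomial, with $|D(1)|=|\lk(K_1,K_2)|=1$. Since $t-2+t^{-1}$ is invariant under $t\mapsto t^{-1}$ and $\tilde\Delta_L$ satisfies the usual symmetry up to units, $Q(t^{-1})=\pm Q(t)$; the sign must be $+$, since a minus sign would force $D(t^{-1})=-D(t)$ and hence $D(1)=0$, contradicting $|D(1)|=1$. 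Thus $Q$ and $D$ are both symmetric, and I write $D(t)=\sum_{k=-N}^{N}d_kt^k$ with $d_k=d_{-k}$ and $d_{\pm N}\ne0$.

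The first step is the lower bound $\rank\KHI(L)\ge4$. Because each $\KHI(L,j)$ is $\Z/2\Z$-graded, $\rank\KHI(L,j)\ge|\chi(\KHI(L,j))|$ with equality mod $2$, so $\rank\KHI(L)\ge\sum_j|\chi(\KHI(L,j))|=\|Q\|_1$, the sum of the absolute values of the coefficients of $Q$. Now the two extreme coefficients of $Q$, in degrees $\pm(N+1)$, are both equal to $\pm d_N$, while the remaining coefficients sum to $\mp2d_N$ because $Q(1)=0$; hence $\|Q\|_1\ge|d_N|+|d_N|+2|d_N|=4|d_N|\ge4$. Comparing with the hypothesis forces $\rank\KHI(L)=\|Q\|_1=4$, and this equality is rigid: it forces $|d_N|=1$, forces $\rank\KHI(L,j)=|\chi(\KHI(L,j))|$ for every $j$ (otherwise the total would exceed $4$ by parity), and forces the non-extreme coefficients of $Q$ to share a common sign (equality in the triangle inequality).

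The second step is to show $N=0$. The non-extreme coefficients of $Q$ are integers of one sign with $\ell^1$-norm $2$, so (using $|d_N|=1$) they are either a single coefficient $\mp2d_N$ or two coefficients each $\mp d_N$; by symmetry of $Q$ the former sits in degree $0$ and the latter in a pair of degrees $\pm k$ with $1\le k\le N$. In the first case $Q(t)=\pm d_N(t^{N+1}-2+t^{-(N+1)})$, so $D(t)=\pm d_N\,t^{-N}(1+t+\dots+t^N)^2$ and $D(1)=\pm(N+1)^2$; since $|D(1)|=1$ this gives $N=0$. In the second case $Q(t)=\pm d_N(t^{N+1}-t^k-t^{-k}+t^{-(N+1)})$, which factors so that $D(t)=\pm d_N\,t^{-N}(1+\dots+t^{N-k})(1+\dots+t^{N+k})$ and $|D(1)|=(N-k+1)(N+k+1)\ge N+k+1\ge3$, contradicting $|D(1)|=1$. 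Hence $N=0$: $\tilde\Delta_L(t,t)=\pm1$, so $\Delta_L(t)=\pm(t^{1/2}-t^{-1/2})$ by Milnor's relation, and $Q(t)=\pm(t-2+t^{-1})$. Reading off the Euler characteristics and using $\rank\KHI(L,j)=|\chi(\KHI(L,j))|$, we get $\rank\KHI(L,\pm1)=1$, $\rank\KHI(L,0)=2$, and $\KHI(L,j)=0$ otherwise, i.e.\ $\KHI(L)=\C_1\oplus(\C^{\oplus2})_0\oplus\C_{-1}$.

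I expect the crux to be the rank bound and its rigidity in the first two steps — concretely, the elementary fact that a symmetric Laurent polynomial $Q$ with $Q(1)=0$ and nonzero extreme coefficients has $\|Q\|_1\ge4$ times the absolute value of its leading coefficient, with equality only on the two explicit families above, together with the observation that on these families the constraint $|D(1)|=1$ coming from $\lk(K_1,K_2)=\pm1$ collapses everything to the trivial case $N=0$. Apart from the earlier parts of the paper, the only external inputs are the symmetry and $\Z/2\Z$-grading of $\KHI(L,\cdot)$ and Torres's nonvanishing identity; the one point that wants care is that the Alexander-grading symmetry of $\KHI$ is compatible with the $\Z/2\Z$-grading, i.e.\ that $Q$ is genuinely symmetric, which (as noted) also follows from $D(1)\ne0$.
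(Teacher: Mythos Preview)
Your proof is correct and lands on the same decisive constraint as the paper --- that $\tilde\Delta_L(1,1)=\pm1$ forces $(N+1)^2-k^2=1$ --- but the path there is organized differently.  The paper first invokes the vector-space symmetry $\KHI(L,j)\cong\KHI(L,-j)$, then shows in two separate steps that $\KHI(L)$ is nonzero in at least three Alexander gradings (from divisibility by $(t-1)^2$) and that the total rank is even (from evaluating \eqref{eq:multi-khi} at $t=-1$), thereby pinning down the rank pattern $\C_m\oplus\C_k\oplus\C_{-k}\oplus\C_{-m}$; it then determines the unknown signs $\epsilon_{\pm m},\epsilon_{\pm k}$ by imposing $Q(1)=Q'(1)=0$, and computes $\tilde\Delta_L(1,1)=\tfrac12 Q''(1)=\pm(m^2-k^2)$.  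You instead bypass the $\KHI$ symmetry entirely (deriving the symmetry of $Q$ from that of $\tilde\Delta_L$ together with $D(1)\ne0$), obtain the lower bound $\rank\KHI(L)\ge\|Q\|_1\ge4|d_N|$ in one stroke from $Q(1)=0$, and use rigidity of this chain of inequalities to force $\rank\KHI(L,j)=|\chi(\KHI(L,j))|$ and the sign pattern simultaneously; your explicit factorizations of $Q$ then give $D(1)$ without passing through derivatives.  Your argument is a bit more self-contained --- it needs neither the Alexander-grading symmetry of $\KHI$ nor the evenness step at $t=-1$ --- and packages the counting more efficiently; the paper's version makes the grading picture and the parity constraint more visibly geometric.
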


\begin{proof}
We first claim that in fact $\KHI(L)$ is nonzero in at least three different Alexander gradings.  If not, then \eqref{eq:multi-khi} has the form
\[ \tilde\Delta_L(t,t) = \pm \left(\frac{ c_1t^{e_1} + c_2t^{e_2} }{t - 2 + t^{-1}}\right) \]
for some integers $c_1,c_2$ which are not both zero and some integers $e_1 \neq e_2$.  (The numerator cannot be identically zero because then $\tilde\Delta_L(t,t)=0$, contradicting $\tilde\Delta_L(1,1) = \pm 1$.)  The denominator has a double root at $t=1$, so it cannot divide the numerator, which is either a monomial (and is thus nonzero when $t=1$) or a binomial with only simple roots, and we have a contradiction.  

Next, we show that $\rank \KHI(L) = 4$.  It is already at least three, since it has positive rank in each of at least three Alexander gradings.  But the rank must be even, since it has the same parity as $\sum_j \chi(\KHI(L,j))(-1)^j$ and this sum is equal to $\pm 4\tilde\Delta_L(-1,-1) \in 4\Z$ by setting $t=-1$ in \eqref{eq:multi-khi}.  Thus the total rank is 4 as claimed.

We let $m > k$ be the two largest Alexander gradings in which $\KHI(L)$ is nonzero.  By symmetry $-m$ is the lowest such grading, so $-m < k < m$; then $\KHI(L,-k) \neq 0$ as well, so we must have had $k \geq 0$.  If $k > 0$ then $\KHI(L,j)$ has positive rank for each of $j=\pm m,\pm k$, and its total rank is 4, so it must have rank exactly one in each of these gradings.  Otherwise $k=0$, and by symmetry we have $\rank(\KHI(L)) \equiv \rank(\KHI(L,0)) \pmod{2}$; the former is even, so the latter is as well, and $\KHI(K,j)$ now has rank at least $1,2,1$ for each of $j=m,0,-m$ and total rank $4$.  In either case we conclude that
\[ \KHI(L) \cong \C_m \oplus \C_k \oplus \C_{-k} \oplus \C_{-m} \]
where $m > k \geq 0$.

We do not know the $\Z/2\Z$ grading of each summand, but from \eqref{eq:multi-khi} 
there must be signs $\epsilon_m,\epsilon_k,\epsilon_{-k},\epsilon_{-m} \in \{\pm 1\}$ such that
\[ \tilde\Delta_L(t,t) = \pm\left(\frac{\epsilon_m t^m + \epsilon_k t^k + \epsilon_{-k} t^{-k} + \epsilon_{-m} t^{-m}}{t-2+t^{-1}}\right). \]
The numerator must be a multiple of $(t-1)^2$, so both it and its derivative are zero at $t=1$, giving us the conditions
\begin{align*}
\epsilon_m + \epsilon_k + \epsilon_{-k} + \epsilon_{-m} &= 0, &
m(\epsilon_m - \epsilon_{-m}) + k(\epsilon_k - \epsilon_{-k}) &= 0.
\end{align*}
From the second equation we have $\epsilon_m = \epsilon_{-m}$, or else the first term would have magnitude $2m$ and thus be strictly greater than $|k(\epsilon_k-\epsilon_{-k})| \leq 2k$; and from $\sum \epsilon_j=0$ we now have $\epsilon_k + \epsilon_{-k} = -2\epsilon_m$, so that $\epsilon_k = \epsilon_{-k} = -\epsilon_m$.  We conclude that
\[ \tilde\Delta_L(t,t) = \pm \left(\frac{t^m - t^k - t^{-k} + t^{-m}}{t-2+t^{-1}} \right). \]

Finally, we use the fact that $\tilde\Delta_L(1,1) = \pm 1$ to determine $m$ and $k$.  Given an equation involving Laurent polynomials of the form $p(t) = t^c(t-1)^2q(t)$ we have $q(1) = \frac{1}{2}p''(1)$, and so taking $p(t) = \pm(t^m-t^k-t^{-k}+t^{-m})$ and $q(t) = \tilde\Delta_L(t,t)$ gives us
\[ \tilde\Delta_L(1,1) = \pm \frac{1}{2}\left.\frac{d^2}{dt^2}(t^m-t^k-t^{-k}+t^{-m})\right|_{t=1} = \pm(m^2-k^2). \]
Since $m > k \geq 0$, this can only be equal to $\pm 1$ if $m=1$ and $k=0$.  Thus $\KHI(L)$ is exactly as claimed, while $\tilde\Delta_L(t,t) = \pm 1$ and $\Delta_L(t) = \pm(t^{1/2}-t^{-1/2})\tilde\Delta_L(t,t) = \pm(t^{1/2}-t^{-1/2})$.
\end{proof}

We now know enough about any link $L$ with the same Khovanov homology as a Hopf link $H_\pm$ to determine its link type.

\begin{proof}[Proof of Theorem~\ref{thm:main}]
We have $L = K_1 \cup K_2$, with both $K_i$ unknotted and $\lk(K_1,K_2)=\pm 1$, by Proposition~\ref{prop:khr-q}.   Proposition~\ref{prop:rank-koszul} gives us the bound $\rank \KHI(L) \leq 4$, so then
\[ \KHI(L) \cong \C_1 \oplus (\C^{\oplus 2})_0 \oplus \C_{-1} \]
by Proposition~\ref{prop:khi-alexander}, where again the subscripts denote the Alexander grading.

We now claim that the Seifert genus of $L$ is $0$, as a consequence of the more general
\[ g(L)+r-1 = \max \{j \mid \KHI(L,j) \neq 0 \} \]
for $r$-component links in $S^3$ with irreducible complement.  This generalizes the case $r=1$ of \cite[Proposition~7.16]{km-excision}, and the proof is essentially the same.  If $\Sigma$ is a genus-$g$ Seifert surface for $L$, then we identify the appropriate Alexander grading with a sutured instanton homology group,
\[ \KHI(L,g+r-1) \cong \SHI(S^3(\Sigma)), \]
where $S^3(\Sigma)$ is obtained by cutting open the complement of $L$ (with a pair of meridional sutures on each component) along $\Sigma$.  (The reason for the shift by $r-1$ is that the $j$th Alexander grading is the generalized $2j$-eigenspace of an operator $\mu(\bar\Sigma)$ on the instanton homology of some closed manifold, and the maximal real eigenvalue of $\mu(\bar\Sigma)$ is $2g(\bar\Sigma)-2 = 2(g+r-1)$, as in \cite[\S 2.5]{km-alexander}.)  If $\Sigma$ is genus-minimizing then $S^3(\Sigma)$ is a taut sutured manifold, so $\SHI(S^3(\Sigma)) \neq 0$ by \cite[Theorem~7.12]{km-excision} and the claim follows.

Thus $L$ bounds an annulus $A$, and the core of $A$ is isotopic to either of the unknots $K_i$ on its boundary.  The boundary is in particular a cable of the unknot, and the only such 2-component links with linking number $\pm1$ are the positive and negative Hopf links.
\end{proof}

\bibliographystyle{alpha}
\bibliography{References}

\end{document}